 \newtheorem{thm}{Theorem}
 \newtheorem{lem}[thm]{Lemma}
 \newtheorem{prop}[thm]{Proposition}
 \theoremstyle{definition}
 \newtheorem{defn}[thm]{Definition}
 \theoremstyle{remark}
 \newtheorem{rem}[thm]{Remark}
 \newcommand{\OO}{{\mathcal O}}
 \newcommand{\M}{{\mathcal M}}
\newcommand{\Nc}{{\mathcal N}}
\DeclareMathOperator{\Spec}{{Spec}}
\newcommand{\pp}{{\mathfrak p}}
\newcommand{\wh}{\widehat}
\DeclareMathOperator{\limi}{{lim}}
\newcommand{\plim}[1]{\,\underset{#1}{\underset{\leftarrow}{\limi}}\,}
\newcommand{\punto}{{\displaystyle \cdot}}
\newcommand{\oplusa}[2][]{\underset{#2}{\overset{#1}{\oplus}}}
\newcommand{\enumera}{\begin{enumerate}}
\newcommand{\eenumera}{\end{enumerate}}
\DeclareMathOperator{\Ker}{{Ker}} \DeclareMathOperator{\Ima}{{Im}}
\begin{document}

\title[A direct proof of the Theorem on Formal Functions]
 {A direct proof of the Theorem on Formal Functions}

\author{ Fernando Sancho de Salas and Pedro Sancho de Salas}
\address{Departamento de Matem\'{a}ticas, Universidad de Salamanca,
Plaza de la Merced 1-4, 37008 Salamanca, Spain}

\email{fsancho@usal.es}

\address{Departamento de Matem\'{a}ticas, Universidad de Extremadura,
Avenida de Elvas s/n, 06071 Badajoz, Spain}

 \email{sancho@unex.es}

\thanks {The firs author was supported by research projects MTM2006-04779 (MEC)
and SA001A07 (JCYL)}


\subjclass[2000]{Primary 14A15; Secondary 14F99}

\keywords{Formal functions, completion, cohomology}

\date{\today}

\dedicatory{}



\begin{abstract}
We give a direct and elementary proof of the theorem on formal
functions by studying the behaviour of the Godement resolution of
a sheaf of modules under completion.

\end{abstract}

\maketitle

\section*{Introduction}

Let $\pi\colon X\to \Spec A$ be a proper scheme over a ring $A$.
Let $\M$ be a coherent $\OO_X$-module and $Y\subset \Spec A$ a
closed subscheme. Let us denote by  ${}^\wedge$ the completion
along  $Y$ (respectively, along $\pi^{-1}(Y)$). The theorem on
formal functions states that
$$H^i(X,\M)^\wedge = H^i(X,\hat\M)$$
Two important corollaries of this theorem are Stein's
factorization theorem and  Zariski's Main Theorem (\cite{Hart}
III, 11.4, 11.5).

Hartshorne \cite{Hart}  gives a proof of the theorem on formal
functions for projective schemes (over a ring). Grothendieck
\cite{Grot} proves it for proper schemes. He first gives
sufficient conditions for the commutation of the cohomology of
complexes of $A$-modules with inverse limits (0, 13.2.3
\cite{Grot}); secondly, he gives a general theorem on the
commutation of the cohomology of sheaves with inverse limits (0,
13.3.1 \cite{Grot}); finally, he laboriously checks that the
theorem on formal functions is under the hypothesis of this
general one (4.1.5 \cite{Grot}).


In this paper we give the ``obvious direct proof'' of the theorem
on formal functions. Very briefly, we prove that the completion of
the Godement resolution of a coherent sheaf is a flasque
resolution of the completion of the coherent sheaf and that taking
sections in the Godement complex  commutes with completion.

\section{Theorem on formal functions}

\begin{defn} Let $X$ be a scheme, $\pp\subset \OO_X$ a sheaf of ideals
and $\M$ an $\OO_X$-module. The  $\pp$-adic completion of $\M$,
denoted by $\wh\M$, is
$$\wh\M :=\plim{n}\M/\pp^n\M$$ \end{defn}

If $U=\Spec A$ is an affine open subset and $I=\pp(U)$, one has a
natural morphism
\[ \Gamma (U,\M)\otimes_A A/I^n \to  \Gamma (U,\M/\pp^n\M)\]
and then a morphism
\[ \Gamma (U, \M)^{\wedge}\to \Gamma (U,\wh\M)\]  where $\Gamma (U,
\M)^{\wh\quad}$ is the $I$-adic completion of $\Gamma (U,\M)$.

\begin{defn}  We say that $\M$ is affinely $\pp$-acyclic if for any
affine open subset  $U$ and any natural number $n$,  the sheaves
$\M$ and $\M/\pp^n\M$ are acyclic on $U$ and the morphism $\Gamma
(U,\M)\otimes_A A/I^n \to  \Gamma (U,\M/\pp^n\M)$ is an
isomorphism. In particular, $\Gamma (U, \M)^{\wedge}\to \Gamma
(U,\wh\M)$ is an isomorphism.\end{defn} Every quasi-coherent
module is affinely $\pp$-acyclic.

\noindent{\ \ \it{Notations}:}  For any sheaf $F$, let us denote
$$0\to F\to C^0F\to C^1F\to \cdots \to C^nF\to \cdots$$
its Godement resolution. We shall denote $C^\punto F=\oplusa{i\geq
0} C^i F$ and $F_i=\Ker (C^iF\to C^{i+1}F)$. One has that
$C^0F_i=C^iF$.

\begin{lem}\label{Affine} Let $X$ be a scheme, $\pp$
a coherent ideal and $\M$ an $\OO_X$-module. Denote
$I=\Gamma(X,\pp)$ and assume that $\pp$ is generated by a finite
number of global sections (this holds for example when $X$ is
affine). For any open subset $V\subseteq X$ one has
\[ \Gamma (V, C^0(\pp \M)) = I\cdot \Gamma (V,C^0\M)\]
In particular, the natural morphism $\pp C^0 M\to C^0(\pp \M)$ is
an isomorphism.\end{lem}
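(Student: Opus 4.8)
The plan is to compute both sides directly in terms of stalks, using the explicit description of the Godement construction: for any sheaf $F$ and any open $V$ one has $\Gamma(V, C^0 F)=\prod_{x\in V}F_x$, with the map $F\to C^0F$ given by $s\mapsto(s_x)_x$ and the $\OO_X$-module structure acting componentwise through germs of functions. In particular, since $(\pp\M)_x=\pp_x\M_x$, we get $\Gamma(V, C^0(\pp\M))=\prod_{x\in V}\pp_x\M_x$, so that the asserted identity becomes
\[ \prod_{x\in V}\pp_x\M_x \;=\; I\cdot\Gamma(V,C^0\M) \qquad\text{inside}\qquad \Gamma(V,C^0\M)=\prod_{x\in V}\M_x. \]

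To prove this, first I would invoke the hypothesis to write $\pp=(f_1,\dots,f_r)$ with $f_1,\dots,f_r\in\Gamma(X,\pp)=I$; then $\pp_x=\sum_{i}f_{i,x}\OO_{X,x}$ and hence $\pp_x\M_x=\sum_i f_{i,x}\M_x$ for \emph{every} point $x$. The inclusion $\prod_{x\in V}\pp_x\M_x\supseteq(f_1,\dots,f_r)\Gamma(V,C^0\M)$ is immediate. For the reverse inclusion --- the only step that really uses the nature of $C^0$ --- take $t=(t_x)_{x\in V}$ with $t_x\in\pp_x\M_x$, choose for each $x$ separately a decomposition $t_x=\sum_{i=1}^r f_{i,x}\,m_{i,x}$ with $m_{i,x}\in\M_x$, and note that the families $m_i:=(m_{i,x})_{x\in V}$ are honest elements of $\Gamma(V,C^0\M)=\prod_{x\in V}\M_x$ with $\sum_{i=1}^r f_i\,m_i=t$; here nothing has to be checked beyond the fact that a section of $C^0\M$ is an \emph{arbitrary} family of germs, so independent pointwise choices automatically assemble. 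Thus $\prod_{x\in V}\pp_x\M_x=(f_1,\dots,f_r)\Gamma(V,C^0\M)$. Since each $f_i$ lies in $I$ while, conversely, any $g\in I=\Gamma(X,\pp)$ has germs $g_x\in\pp_x$ (so $g\cdot(m_x)_x=(g_x m_x)_x\in\prod_x\pp_x\M_x$), we obtain the chain
\[ (f_1,\dots,f_r)\Gamma(V,C^0\M)\ \subseteq\ I\cdot\Gamma(V,C^0\M)\ \subseteq\ \prod_{x\in V}\pp_x\M_x=(f_1,\dots,f_r)\Gamma(V,C^0\M), \]
whence all three are equal and $\Gamma(V,C^0(\pp\M))=I\cdot\Gamma(V,C^0\M)$.

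For the final assertion, recall that $\pp C^0\M$ is by definition the image subsheaf of the multiplication map $\pp\otimes_{\OO_X}C^0\M\to C^0\M$, i.e. the sheafification of the sub-presheaf $V\mapsto\Gamma(V,\pp)\cdot\Gamma(V,C^0\M)$ of $C^0\M$, and that $C^0(\pp\M)$ sits inside $C^0\M$ via the inclusion $\pp\M\hookrightarrow\M$ (an inclusion of sheaves because $C^0$ is exact, being a product of stalks), its sections over $V$ being exactly $\prod_{x\in V}\pp_x\M_x$. Since $f_1,\dots,f_r$ also generate $\pp$ over every open $V$, the argument of the previous paragraph, run with $\Gamma(V,\pp)$ in place of $I$, gives $\Gamma(V,\pp)\cdot\Gamma(V,C^0\M)=\prod_{x\in V}\pp_x\M_x=\Gamma(V,C^0(\pp\M))$ already for every $V$; hence the sub-presheaf is already a sheaf, no sheafification is needed, and $\pp C^0\M=C^0(\pp\M)$. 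The whole argument is elementary; the one place demanding a little care is the reverse inclusion in the second paragraph, where one must notice both that the pointwise preimages $m_{i,x}$ do constitute a legitimate section of $C^0\M$ and that this is precisely the point at which one needs $\pp$ to be generated by \emph{finitely many} \emph{global} sections --- an infinite generating family could not be gathered into a single finite $I$-linear combination of sections of $C^0\M$.
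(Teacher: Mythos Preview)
Your proof is correct and follows essentially the same approach as the paper's: both compute $\Gamma(V,C^0(\pp\M))=\prod_{x\in V}\pp_x\M_x$ and then use that for a finitely generated ideal $J=(f_1,\dots,f_r)$ one has $J\cdot\prod_x\M_x=\prod_x J\M_x$, together with the sandwich $(f_1,\dots,f_r)\Gamma(V,C^0\M)\subseteq I\cdot\Gamma(V,C^0\M)\subseteq\prod_x\pp_x\M_x$. The paper states the algebraic fact $J\cdot\prod M_i=\prod(J\cdot M_i)$ up front and then applies it, while you unpack it explicitly via pointwise decompositions $t_x=\sum f_{i,x}m_{i,x}$; and for the final assertion the paper checks the equality only on affine $V$ (which suffices to identify the sheaves), whereas you observe that the same sandwich argument works for arbitrary $V$---but these are differences of exposition, not of method.
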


\begin{proof} If $J$ is a finitely generated ideal of a ring $A$ and
$M_i$ is a collection of $A$-modules, then $J\cdot \prod M_i =
\prod (J\cdot M_i)$. Now, by hypothesis $\pp$ is generated by a
finite number of global sections $f_1,\dots ,f_r$. Let
$J=(f_1,\dots,f_r)$. Then
\[ \Gamma (V, C^0(\pp \M)) =\prod_{x\in V} \pp_x\cdot \M_x = \prod_{x\in V} J\cdot \M_x
= J\cdot \prod_{x\in V} \M_x = J\cdot \Gamma (V,C^0\M)\] Since
$I\cdot \prod_{x\in V} \M_x $ is contained in $\Gamma (V, C^0(\pp
\M))$ one concludes. In particular, if $V$ is affine, then $
\Gamma (V, C^0(\pp \M)) = I_V\cdot \Gamma (V,C^0\M)$, with
$I_V=\Gamma (V,\pp)$. It follows that $\pp C^0 M\to C^0(\pp \M)$
is an isomorphism.
\end{proof}

\begin{prop} \label{CompFlas} Let $X$ be a scheme and let $\pp$ be a
coherent ideal. For any $\OO_X$-module $\M$ one has: \enumera
\item  $\pp  C^i\M=C^i(\pp\M)$ and
$(C^i\M)/\pp (C^i\M) = C^i(\M/\pp\M)$, for any $i$.

\item $C^0\M$ is affinely $\pp$-acyclic.

\item $\wh{C^0 \M }$ is flasque. Moreover, if $\pp$ is generated by a finite number of global sections, then
$$\Gamma(X, \wh{C^0\M})=\Gamma(X,C^0\M)^\wedge$$
\eenumera\end{prop}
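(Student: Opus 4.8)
The plan is to prove the three statements in order, using (1) to get (2) and (3). Recall that the Godement functor $C^0$ (with $\Gamma(V,C^0\Nc)=\prod_{x\in V}\Nc_x$) is exact --- stalks and arbitrary products are --- that $\Nc\hookrightarrow C^0\Nc$ with $C^0\Nc$ flasque, and that $C^i\Nc=C^0(\Nc_i)$ where $\Nc_0=\Nc$ and $\Nc_{i+1}=C^0(\Nc_i)/\Nc_i$.

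\emph{Part (1).} Since $C^0$ is exact, so is the functor $\Nc\mapsto\Nc_1=C^0\Nc/\Nc$ (snake lemma, using $\Nc\hookrightarrow C^0\Nc$), hence so is $\Nc\mapsto\Nc_i$ for every $i$ and therefore so is each $C^i$; applying $C^i$ to $0\to\pp\M\to\M\to\M/\pp\M\to0$ thus reduces the second identity to the first. The first identity, $\pp C^i\M=C^i(\pp\M)$, is local on $X$ and the Godement construction commutes with restriction to opens, so I may assume $X$ affine; then $\pp$ is generated by finitely many global sections and Lemma~\ref{Affine} gives $\pp C^0\Nc=C^0(\pp\Nc)$ for every $\OO_X$-module $\Nc$ --- the case $i=0$. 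For $i\ge1$ I would induct on $i$, showing $(\pp\M)_i=\pp\M_i$ as subsheaves of $C^i\M=C^0(\M_i)$, the case $i=0$ being trivial. Granting this at stage $i$: Lemma~\ref{Affine} applied to the module $\M_i$ gives $\pp C^i\M=\pp C^0(\M_i)=C^0(\pp\M_i)=C^0((\pp\M)_i)=C^i(\pp\M)$, which is the identity at level $i$; and $(\pp\M)_{i+1}=C^0((\pp\M)_i)/(\pp\M)_i=\pp C^0(\M_i)/\pp\M_i$, which --- using that a section of $\M_i$ lies in the subsheaf $\pp\M_i$ precisely when all of its germs do, i.e. $\pp\M_i=\pp C^0(\M_i)\cap\M_i$ inside $C^0(\M_i)$ --- is identified with the image of $\pp C^0(\M_i)$ in $C^0(\M_i)/\M_i=\M_{i+1}$, namely $\pp\M_{i+1}$, closing the induction. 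I expect this propagation of the affine case up the Godement tower --- the germwise identity $\pp\M_i=\pp C^0(\M_i)\cap\M_i$ in particular --- to be the one genuinely delicate point.

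\emph{Part (2).} Fix an affine open $U$, put $A=\OO_X(U)$, $I=\pp(U)$, and let $n\ge1$. Iterating Part (1) gives $\pp^nC^0\M=C^0(\pp^n\M)$, whence, by exactness of $C^0$, $C^0\M/\pp^nC^0\M=C^0(\M/\pp^n\M)$; thus $C^0\M$ and $C^0\M/\pp^nC^0\M$ are each $C^0$ of a module, hence flasque and so acyclic on $U$. Flasqueness of $\pp^nC^0\M=C^0(\pp^n\M)$ makes $\Gamma(U,-)$ exact on $0\to\pp^nC^0\M\to C^0\M\to C^0\M/\pp^nC^0\M\to0$, so $\Gamma(U,C^0\M/\pp^nC^0\M)=\Gamma(U,C^0\M)/\Gamma(U,C^0(\pp^n\M))$, and iterating Lemma~\ref{Affine} over $U$ gives $\Gamma(U,C^0(\pp^n\M))=I^n\Gamma(U,C^0\M)$. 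Hence the natural map from $\Gamma(U,C^0\M)\otimes_A A/I^n=\Gamma(U,C^0\M)/I^n\Gamma(U,C^0\M)$ to $\Gamma(U,C^0\M/\pp^nC^0\M)$ is the identity, so $C^0\M$ is affinely $\pp$-acyclic.

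\emph{Part (3).} The useful observation is that $\wh{C^0\M}$ is again a sheaf of Godement type. By Part (1) one has $\wh{C^0\M}=\plim{n}C^0(\M/\pp^n\M)$, so for any open $V$, since sheaf inverse limits are computed sectionwise and commute with products,
\[ \Gamma(V,\wh{C^0\M})=\plim{n}\prod_{x\in V}\M_x/\pp_x^n\M_x=\prod_{x\in V}\bigl(\plim{n}\M_x/\pp_x^n\M_x\bigr),\]
a product over $x\in V$ whose restriction maps (for $W\subseteq V$) are the obvious projections; these being surjective, $\wh{C^0\M}$ is flasque. Finally, assume $\pp$ is generated by finitely many global sections and set $I=\Gamma(X,\pp)$. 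Flasqueness of $\pp^nC^0\M=C^0(\pp^n\M)$ together with the iterated Lemma~\ref{Affine} over $X$ yields $\Gamma(X,C^0\M/\pp^nC^0\M)=\Gamma(X,C^0\M)/I^n\Gamma(X,C^0\M)$ exactly as in Part (2), and therefore
\[ \Gamma(X,\wh{C^0\M})=\plim{n}\Gamma(X,C^0\M)/I^n\Gamma(X,C^0\M)=\Gamma(X,C^0\M)^{\wedge}.\]
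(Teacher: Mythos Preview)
Your proof is correct and follows essentially the same route as the paper: reduce to affines, invoke Lemma~\ref{Affine} for $i=0$, and induct up the Godement tower via the identity $(\pp\M)_i=\pp\M_i$. The one notable variant is your flasqueness argument in Part~(3): you recognize $\wh{C^0\M}$ directly as the product sheaf $V\mapsto\prod_{x\in V}\wh{\M_x}$ (with surjective restriction maps), whereas the paper restricts to an affine, computes $\Gamma(V,\wh{C^0\M})=\Gamma(V,C^0\M)^\wedge$, and then uses that $C^0\M$ is flasque and that $I$-adic completion preserves surjections.
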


\begin{proof} 1. We may assume that $X$ is affine.  Hence $\pp C^0 \M =
C^0 (\pp \M)$ by the previous lemma and $(C^0\M)/\pp C^0\M =
C^0\M/C^0(\pp\M) = C^0(\M/\pp\M)$. From the exact sequence
\[  \M/\pp \M  \to C^0\M/\pp C^0\M \to
\M_1/\pp\M_1 \to 0\] and the isomorphism $C^0\M/\pp C^0\M =
C^0(\M/\pp\M)$ it follows that $\M_1/\pp \M_1=(\M/\pp\M)_1$ and
$\pp\M_1=(\pp\M)_1$. Consequently $\pp C^1\M =\pp
C^0(\M_1)=C^0(\pp\M_1)=C^0((\pp\M)_1)=C^1(\pp\M)$, and analogously
$C^1\M/\pp C^1\M = C^1(\M/\pp\M)$. Repeating this argument one
concludes 1.

2. Denote $\Nc=C^0\M$. By (1), $\Nc/\pp^n\Nc$ is acyclic on any
open subset. From the long exact sequence of cohomology associated
to $0\to \pp^n\Nc\to \Nc\to \Nc/\pp^n\Nc\to 0$ and the acyclicity
of $\pp^n\Nc$ (by (1)) one obtains that  $$\Gamma (U,
\Nc/\pp^n\Nc) = \Gamma (U,\Nc)/\Gamma (U,\pp^n\Nc).$$ Moreover, if
$U$ is affine
 $\Gamma (U,\pp^n\Nc)= \pp^n(U)\Gamma (U,\Nc)$, by Lemma
 \ref{Affine}. We have concluded.

3. Let us prove that $\Nc=\wh{C^0 \M }$ is flasque. It suffices to
prove that its restriction to any affine open subset is flasque,
so we may assume that $X$ is affine. Let us denote $I=\pp(X)$. For
any open subset $V$, one has as in the proof of (2)
$$\Gamma (V,\wh{\Nc})= \plim{n}\Gamma (V, \Nc/\pp^n\Nc)  = \plim{n} \Gamma (V, \Nc)/\Gamma (V,
\pp^n\Nc)$$ and by Lemma \ref{Affine}, $\Gamma (V, \pp^n\Nc) =
I^n\Gamma (V, \Nc)$. In conclusion, $\Gamma (V,\wh{\Nc})= \Gamma
(V,\Nc)^{\wh\quad}$. One concludes that $\wh\Nc$ is flasque
because $\Nc$ is flasque and the $I$-adic completion preserves
surjections. The same arguments prove the second part of the
satement.
\end{proof}

\begin{prop} \label{CompGod} If $\M$ is affinely $\pp$-acyclic, then
$\wh{C^\punto\M}$ is a flasque resolution of $\wh\M$. \end{prop}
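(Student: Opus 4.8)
The plan is to verify separately the two requirements in the definition of a flasque resolution: that each term $\wh{C^i\M}$ is flasque, and that the augmented complex $0\to\wh\M\to\wh{C^0\M}\to\wh{C^1\M}\to\cdots$ is exact. The first is immediate from the previous proposition: since $C^i\M=C^0(\M_i)$, Proposition~\ref{CompFlas}(3) applied to the module $\M_i$ shows that $\wh{C^i\M}$ is flasque. So the real content is the exactness, and I would obtain it by reducing modulo $\pp^n$ and then passing to the inverse limit over $n$.

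The first step is to identify, for each $n$, the complex $C^\punto\M/\pp^n C^\punto\M$ with the Godement resolution of $\M/\pp^n\M$. Iterating the equality $\pp C^i\M=C^i(\pp\M)$ of Proposition~\ref{CompFlas}(1) (applied successively to $\M,\pp\M,\pp^2\M,\dots$) gives $\pp^n C^i\M=C^i(\pp^n\M)$; since each functor $C^i$ is exact, this yields an identification $C^i\M/\pp^n C^i\M=C^i(\M/\pp^n\M)$, compatible with the differentials. Consequently, for every $n$ the complex
$$0\to\M/\pp^n\M\to C^0\M/\pp^n C^0\M\to C^1\M/\pp^n C^1\M\to\cdots$$
is exact, being the Godement resolution of $\M/\pp^n\M$, and taking $\plim{n}$ term by term recovers $0\to\wh\M\to\wh{C^0\M}\to\wh{C^1\M}\to\cdots$ (using $\plim{n}(\M/\pp^n\M)=\wh\M$).

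The delicate point --- the main obstacle --- is that $\plim{n}$ is not exact, so I must rule out the intervention of its first derived functor. I would split each of the exact complexes above into short exact sequences $0\to Z^n_i\to C^i\M/\pp^n C^i\M\to Z^n_{i+1}\to 0$, where $Z^n_i$ is the $i$-th cocycle sheaf and $Z^n_0=\M/\pp^n\M$. The inverse system $\{C^i\M/\pp^n C^i\M\}_n$ has surjective (canonical) transition maps, and the image of a morphism of inverse systems whose source has surjective transition maps again has surjective transition maps; hence each $\{Z^n_i\}_n$ is a Mittag--Leffler system. Therefore $\plim{n}$ transforms these short exact sequences into short exact sequences $0\to\plim{n}Z^n_i\to\wh{C^i\M}\to\plim{n}Z^n_{i+1}\to 0$, and splicing them --- noting that the augmentation $\wh\M\to\wh{C^0\M}$ is injective, being $\plim{n}$ of the injections $\M/\pp^n\M\hookrightarrow C^0(\M/\pp^n\M)$ --- shows that $0\to\wh\M\to\wh{C^0\M}\to\wh{C^1\M}\to\cdots$ is exact. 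Together with the flasqueness of the terms, this proves the proposition; the verification of the Mittag--Leffler condition is the step I expect to require the most care, everything else being formal once Proposition~\ref{CompFlas} is available.
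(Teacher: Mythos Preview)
Your reduction via Proposition~\ref{CompFlas}(1) to the Godement resolution of $\M/\pp^n\M$ is correct, and so is the plan of splitting into short exact sequences and taking an inverse limit. The gap is the Mittag--Leffler step: you apply it to inverse systems of \emph{sheaves}. The implication ``surjective transition maps $\Rightarrow$ $\plim{n}$ preserves short exact sequences'' is a theorem about inverse systems of abelian groups (or modules); it does not hold in the category of sheaves on a space, which fails AB4* (countable products are not exact). Concretely, even though each $Z^n_i\to Z^{n-1}_i$ is an epimorphism of sheaves, on sections over an open $U$ it need not be surjective, so the sectionwise system $\{Z^n_i(U)\}_n$ need not satisfy ML and $(\plim{n}Z^n_i)(U)=\plim{n}(Z^n_i(U))$ is not controlled. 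Thus from sheaf-level surjectivity you cannot conclude that $0\to\plim{n}Z^n_i\to\wh{C^i\M}\to\plim{n}Z^n_{i+1}\to 0$ is exact. A clear symptom of the problem is that your argument never invokes the hypothesis that $\M$ is affinely $\pp$-acyclic.

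The paper's proof is essentially your idea carried out \emph{on sections over affine opens}, where that hypothesis does the real work. For affine $U=\Spec A$ it gives $\Gamma(U,\M/\pp^n\M)=\Gamma(U,\M)\otimes_A A/I^n$, so the relevant inverse systems of sections are honest $I$-adic ones; since $I$-adic completion preserves surjections, the sequence $0\to\Gamma(U,\M)^\wedge\to\Gamma(U,C^0\M)^\wedge\to\Gamma(U,\M_1)^\wedge\to 0$ is exact, hence $0\to\wh\M\to\wh{C^0\M}\to\wh{\M_1}\to 0$ is exact as a sequence of sheaves. Simultaneously one checks that $\M_1$ is again affinely $\pp$-acyclic, so the argument iterates. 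Your approach can be repaired along exactly these lines---restrict to affine opens, use the hypothesis to obtain short exact sequences of $I$-adic inverse systems of \emph{modules}, and then apply Mittag--Leffler there---but as written, the passage from the sheaf-level ML condition to exactness of the limit is not justified.
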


\begin{proof} We already know that $\wh{C^\punto\M}$ is flasque.
Let us prove now that $\M_1$ is affinely $\pp$-acyclic. From the
exact sequence
\[0\to \M/\pp^n \M  \to C^0(\M/\pp^n\M) \to \M_1/\pp^n\M_1 \to 0\]
one has that  $\M_1/\pp^n\M_1$ is acyclic on any affine open
subset. Moreover, taking sections on an affine open subset
$U=\Spec A$, one obtains the exact sequence (let us denote
$I=\pp(U)$)
\[ 0\to \Gamma (U,\M)\otimes_AA/I^n \to \Gamma (U,C^0 \M)\otimes_AA/I^n
 \to \Gamma (U,\M_1/\pp^n\M_1) \to 0\]
and then $\Gamma (U,\M_1)\otimes_AA/I^n = \Gamma
(U,\M_1/\pp^n\M_1)$, i. e. $\M_1$ is affinely $\pp$-acyclic.

Now, taking inverse limit in the above exact sequence  (and taking
into account that the $I$-adic completion preserves surjections)
one obtains the exact sequence
$$0\to \Gamma (U,\wh \M)\to \Gamma (U,\wh {C^0\M})\to \Gamma
(U,\wh {\M_1})\to 0$$ Therefore the sequence $0\to \wh\M\to
\wh{C^0\M} \to \wh{\M_1}\to 0$ is exact. Conclusion follows
easily. \end{proof}

\begin{rem} In the proof of the preceding proposition it has been proved
that if $\M$ is affinely $\pp$-acyclic, then $\wh\M$ is acyclic on
any affine subset.\end{rem}

\begin{lem} \label{SSL} Let $A$ be a noetherian ring and  $I\subseteq A$
an ideal. If $0\to M'\to M\to N\to 0$ is an exact sequence of
$A$-modules and  $N$ is finitely generated, then the $I$-adic
completion $0\to \wh {M'}\to \wh M\to \wh N\to 0$ is exact.
\end{lem}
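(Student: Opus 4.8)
The plan is to deduce the statement from the Artin--Rees lemma by comparing two filtrations on $M'$, being careful to isolate exactly which steps need the noetherian and finiteness hypotheses. The naive approach — taking inverse limits of $0\to M'/I^nM'\to M/I^nM\to N/I^nN\to 0$ — fails because these do not form an exact sequence of inverse systems; the $I$-adic filtration of $M'$ has to be replaced by the filtration induced from $M$, and then one must show the two are cofinal.

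Concretely, I would first note that for each $n$ the preimage in $M$ of $I^nN$ equals $M'+I^nM$, so there is an exact sequence
\[ 0\to M'/(M'\cap I^nM)\to M/I^nM\to N/I^nN\to 0, \]
and these fit into a short exact sequence of inverse systems in which the transition maps of $\{M'/(M'\cap I^nM)\}_n$ are surjective. By the very same Mittag--Leffler argument already used in the paper (``the $I$-adic completion preserves surjections''), passing to the inverse limit gives an exact sequence
\[ 0\to \plim{n} M'/(M'\cap I^nM)\to \wh M\to \wh N\to 0, \]
and this step uses no finiteness at all. It then remains to identify $\plim{n} M'/(M'\cap I^nM)$ with $\wh{M'}=\plim{n}M'/I^nM'$. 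Since $I^nM'\subseteq M'\cap I^nM$ there is a canonical morphism $\wh{M'}\to \plim{n}M'/(M'\cap I^nM)$, and it is an isomorphism precisely when the filtrations $\{I^nM'\}_n$ and $\{M'\cap I^nM\}_n$ of $M'$ are cofinal.

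This is where $N$ finitely generated is used. Lifting a finite generating set of $N$ to $M$ produces a finitely generated submodule $M_0\subseteq M$ with $M=M'+M_0$, hence $I^nM=I^nM'+I^nM_0$, and a short computation gives
\[ M'\cap I^nM = I^nM' + \big(M'\cap M_0\cap I^nM_0\big). \]
Now $M_0$ and $M'\cap M_0$ are finitely generated over the noetherian ring $A$, so the Artin--Rees lemma applies to the inclusion $M'\cap M_0\subseteq M_0$: there is $k$ with $(M'\cap M_0)\cap I^nM_0 = I^{n-k}\big((M'\cap M_0)\cap I^kM_0\big)\subseteq I^{n-k}M'$ for all $n\ge k$. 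Therefore $M'\cap I^nM\subseteq I^{n-k}M'$ for $n\ge k$, which is the desired cofinality, and substituting $\wh{M'}=\plim{n}M'/(M'\cap I^nM)$ into the exact sequence above finishes the proof.

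The main obstacle is exactly this filtration comparison: the classical Artin--Rees lemma needs the \emph{ambient} module to be finitely generated, whereas here $M$ is arbitrary. The trick that saves the day is that finite generation of the \emph{quotient} $N$ lets one squeeze a finitely generated submodule $M_0$ between $M'$ and $M$, so that Artin--Rees can be applied inside $M_0$ instead of inside $M$; everything else is routine inverse-limit bookkeeping relying only on the acyclicity of surjective inverse systems for the inverse limit.
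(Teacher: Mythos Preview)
Your proof is correct and rests on the same key idea as the paper's: lift a finite generating set of $N$ to a finite submodule $L\subseteq M$ (your $M_0$), set $L'=L\cap M'$, and apply Artin--Rees to the inclusion $L'\subseteq L$ of \emph{finite} modules over the noetherian ring $A$. The packaging differs slightly: you argue directly that the filtrations $\{I^nM'\}$ and $\{M'\cap I^nM\}$ on $M'$ are cofinal via the identity $M'\cap I^nM = I^nM' + (L'\cap I^nL)$, whereas the paper organises the same ingredients through three auxiliary short exact sequences
\[
0\to L\to M\to M/L\to 0,\qquad 0\to L'\to M'\to M'/L'\to 0,\qquad 0\to L'\to L\to N\to 0
\]
together with the isomorphism $M/L\simeq M'/L'$, and then concludes by a diagram chase. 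Your route is more explicit about exactly where each hypothesis is used and isolates the single filtration comparison that carries the whole argument; the paper's version is terser but asks the reader to supply essentially the computation you wrote out.
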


\begin{proof} Let $L\subseteq M$ be a finite submodule surjecting on $N$
and $L'=L\cap M'$ which is also finite because $A$ is noetherian.
The exact sequences $$0\to L\to M\to M/L\to 0,\quad 0\to L'\to
M'\to M'/L'\to 0,\quad 0\to L'\to L\to N\to 0$$ remain exact after
$I$-adic completion, because $L$ and $L'$ are finite (this is a
consequence of Artin-Rees lemma (10.10 \cite{Atiy})). Since
$M/L\simeq M'/L'$ one concludes. \end{proof}




\begin{thm}[on formal functions ] Let $f\colon X\to Y$ be a proper
morphism of locally noetherian schemes,   $\pp$  a coherent sheaf
of ideals on $Y$ and $\pp\OO_X$ the ideal induced in $X$.  For any
coherent module $\M$ on $X$, the natural morphisms (where
completions are made by $\pp$ and $\pp\OO_X$ respectively)
$$\wh{R^if_*\M}\to R^if_*(\wh\M)$$ are isomorphisms. If $Y=\Spec
A$, then $$H^i(X,\M)^{\wedge}=H^i(X,\wh\M)$$ \end{thm}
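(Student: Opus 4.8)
The plan is to reduce the global statement to the affine case $Y=\Spec A$ and then use the completion of the Godement resolution directly. First I would reduce to $Y=\Spec A$: since the claim that $\wh{R^if_*\M}\to R^if_*(\wh\M)$ is an isomorphism is local on $Y$, I may assume $Y=\Spec A$ is affine and noetherian, so that $R^if_*\M$ is the quasi-coherent sheaf associated to $H^i(X,\M)$ and, since completion of a quasi-coherent sheaf on $\Spec A$ corresponds to $I$-adic completion of its module of global sections (using that $A$ is noetherian and the cohomology modules are finitely generated by properness and coherence), it suffices to prove the stated identity $H^i(X,\M)^\wedge = H^i(X,\wh\M)$ with completions taken $I$-adically on the left and $\pp\OO_X$-adically on the right.

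Next I would compute both sides via the Godement resolution. Since $\M$ is coherent, it is quasi-coherent, hence affinely $\pp\OO_X$-acyclic, so by Proposition~\ref{CompGod} the complex $\wh{C^\punto\M}$ is a flasque resolution of $\wh\M$; therefore $H^i(X,\wh\M)=H^i\big(\Gamma(X,\wh{C^\punto\M})\big)$. On the other side, $H^i(X,\M)=H^i\big(\Gamma(X,C^\punto\M)\big)$ because $C^\punto\M$ is a flasque resolution of $\M$. So the theorem amounts to the identity
$$H^i\big(\Gamma(X,C^\punto\M)\big)^\wedge = H^i\big(\Gamma(X,\wh{C^\punto\M})\big).$$
The key intermediate fact is that taking sections commutes with completion on each term of the Godement complex: by Proposition~\ref{CompFlas}(3), for the affine cover of $X$ by a finite number of affines (using properness to get finiteness and that $\pp\OO_X$ is generated by finitely many global sections locally), one gets $\Gamma(X,\wh{C^i\M}) = \Gamma(X,C^i\M)^\wedge$, i.e. the completed Godement complex has global sections equal to the $I$-adic completion of the complex $\Gamma(X,C^\punto\M)$.

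Thus I am reduced to a purely algebraic statement: if $K^\punto$ is a complex of $A$-modules (here $K^i=\Gamma(X,C^i\M)$) whose cohomology modules $H^i(K^\punto)$ are finitely generated $A$-modules, then $H^i(K^\punto)^\wedge = H^i(\wh{K^\punto})$, where $\wh{\ }$ is $I$-adic completion. This is where Lemma~\ref{SSL} does the work: breaking $K^\punto$ into short exact sequences $0\to Z^i\to K^i\to B^{i+1}\to 0$ and $0\to B^i\to Z^i\to H^i\to 0$, one needs each to stay exact after $I$-adic completion; the second does because $H^i$ is finitely generated, and then one checks the first does too, using that the relevant subquotients are finitely generated (this is precisely the kind of Artin--Rees argument packaged in Lemma~\ref{SSL}, applied to the truncations of $K^\punto$). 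Since completion is then exact on the pieces involved, it commutes with taking cohomology of $K^\punto$, giving the result.

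The main obstacle I anticipate is the last algebraic step: one must be careful that all the modules $Z^i$, $B^i$ one completes have enough finiteness, even though the $K^i$ themselves are huge (infinite products). The trick is that one only needs finiteness of the cohomology $H^i(K^\punto)$ and should work with the ``brutal'' or canonical truncations $\tau_{\le n}K^\punto$, whose top cohomology is finitely generated, so that Lemma~\ref{SSL} applies to the surjection onto that cohomology; an induction on the length of the truncation, repeatedly invoking Lemma~\ref{SSL}, then shows $H^i$ commutes with completion. Properness of $f$ enters only to guarantee coherence of $R^if_*\M$, equivalently finite generation of $H^i(X,\M)$, which is exactly the hypothesis that makes Lemma~\ref{SSL} applicable; without it the argument breaks down, consistent with the theorem being false for non-proper $f$.
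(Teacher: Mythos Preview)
Your outline matches the paper's strategy up to the last algebraic step, but that step contains a real gap. You want the short exact sequence $0\to Z^i\to K^i\to B^{i+1}\to 0$ to stay exact after $I$-adic completion, and you invoke Lemma~\ref{SSL} and ``Artin--Rees''. Neither applies: Lemma~\ref{SSL} needs the cokernel to be finitely generated, and here $B^{i+1}$ (like $K^i$ and $Z^i$) is enormous. Your truncation idea does not avoid the problem: to compare $H^i(\wh{K^\punto})$ with $\wh{H^i(K^\punto)}$ you must identify $\Ker(\wh{d_i})$ inside $\wh{K^i}$ with $\wh{Z^i}$, which is exactly the question of whether the $I$-adic topology on $K^i$ restricts to the $I$-adic topology on $Z^i$; no amount of induction on truncations produces that without an extra input. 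A purely abstract ``$H^i$ finitely generated $\Rightarrow$ completion commutes with cohomology'' statement for arbitrary complexes of $A$-modules is not available.

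The paper supplies precisely this missing input, using the specific structure of the Godement complex. From Lemma~\ref{Affine} one has $I^n\Gamma(X,C^i\M)=\Gamma(X,C^i(\pp^n\M))$, hence $I^nK^i\cap Z^i=\Gamma(X,\pp^n\M_i)=\Gamma(X,(\pp^n\M)_i)$. Showing this filtration is $I$-stable is then reduced, via the exact sequence
\[
\bigoplus_n \Gamma(X,C^{i-1}(\pp^n\M))\to \bigoplus_n \Gamma(X,(\pp^n\M)_i)\to \bigoplus_n H^i(X,\pp^n\M)\to 0,
\]
to proving that $\bigoplus_n H^i(X,\pp^n\M)$ is a finite module over the Rees algebra $D_IA=\bigoplus_n I^n$; this holds because it is $H^i$ of the coherent sheaf $\bigoplus_n \pp^n\M$ on the \emph{proper} $D_IA$-scheme $X\times_A D_IA$. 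So properness enters twice: once for finiteness of $H^i(X,\M)$, and once more here. (A minor point: since $Y=\Spec A$ is affine and $\pp$ is coherent, $\pp\OO_X$ is generated by finitely many \emph{global} sections, so Proposition~\ref{CompFlas}(3) applies directly to $X$; no finite affine cover is needed.)
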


\begin{proof} The question is local on $Y$, so we may assume that $Y=\Spec
A$ is affine.  It suffices to show that
$H^i(X,\M)^{\wh\quad}=H^i(X,\wh\M)$. It is clear that $\pp\OO_X$
is generated by its global sections. As usual, we denote $I=\Gamma
(X, \pp\OO_X)$.

Let $C^\punto \M$ be the Godement resolution of $\M$. Then
$\wh{C^\punto M}$ is a flasque resolution of $\wh{\M}$ (by
Proposition \ref{CompGod}) and $\Gamma (X, \wh{C^\punto \M})=
\Gamma (X, C^\punto\M)^{\wh\quad}$ (by Proposition \ref{CompFlas},
(3)). Then we have to prove that the natural map
$$H^i(X,\M)^{\wedge}= [H^i\Gamma(X,C^\punto\M)]^{\wedge}
\to H^i(\Gamma(X,C^\punto\M)^{\wedge})= H^i(\Gamma(X,\wh{C^\punto
\M})) = H^i(X,\wh{\M})$$ is an isomorphism. Let us denote by $d_i$
the differential of the complex $\Gamma (X,C^\punto\M)$ on degree
$i$. Completing the exact sequences
\[\aligned   0\to
\Ker d_i \to \,\,&\Gamma (X, C^{i}\M )\to \Ima d_i\to 0
\endaligned\] we obtain the exact sequences
\[\aligned
0\to \wh{\Ker d_i} \to\,\, &\Gamma (X, \wh{C^{i}\M})\to \wh{\Ima
d_i}\to 0
\endaligned\] because, as we shall see below, the $I$-adic
topology of $\Gamma (X,{C^{i}\M})$ induces in $\Ker d_i$ the
$I$-adic topology. Hence
$$ {H^i(X,\M)}^{\wedge} = (\Ker d_i/\Ima d_{i-1})^{\wedge}
\overset{\text{Lemma } \ref{SSL}} {=\hspace {-2pt} =\hspace {-2pt}
=\hspace {-2pt} =\hspace {-2pt} =} \wh{\Ker d_i}/\wh{\Ima
d_{i-1}}= H^i(X,\wh\M)
$$

Let $\M_i$ be the kernel of $C^{i}\M\to C^{i+1}\M$ (recall that
$C^{i}\M=C^0\M_i$). Let us prove that the $I$-adic topology of
$\Gamma (X, C^i \M)$ induces the $I$-adic topology on $\Ker
d_i=\Gamma(X,\M_i)$. Intersecting the equality $I^n \Gamma(X,
C^0\M_i) = \Gamma (X, C^0(\pp^n\M_i))$ with $\Gamma (X,\M_i )$,
one obtains that the induced topology on $\Gamma (X, \M_i )$ is
given by the filtration $\{ \Gamma (X, \pp^n\M_i)\}$. Hence it
suffices to show that this filtration is $I$-stable. Since
$\pp^n\M_i = (\pp^n\M)_i$ (see the proof of \ref{CompFlas} 1.), it
is enough to prove that the filtration $\{ \Gamma (X,
(\pp^n\M)_i)\}$ is $I$-stable; this is equivalent to show that
$\oplus^\infty_{n=0} \Gamma (X, (\pp^n\M)_i)$ is a $D_IA$-module
generated by a finite number of homogeneous components, where
$D_IA=\oplusa[\infty]{n=0} I^n$. By the exact sequence
$$\oplusa[\infty]{n=0} \Gamma (X, C^{i-1}(\pp^n\M)) \to
\oplusa[\infty]{n=0} \Gamma (X, (\pp^n\M)_i) \to
\oplusa[\infty]{n=0} H^i(X,\pp^n\M)\to 0$$ it suffices to see the
statement for the first and the third members. For the first one
is obvious  because $\Gamma (X, C^{i-1}(\pp^n\M))=I^n\Gamma (X,
C^{i-1}\M)$. For the third one, it suffices to see that it is a
finite $D_IA$-module. Let $X'=X\times_A D_IA$, $\pi\colon X'\to X$
the natural projection and $\M'=\oplusa[\infty]{n=0} \pp^n\M$ the
obvious $\OO_{X'}$-module. Since $H^i(X',\M')$ is a finite
$D_IA$-module, one concludes from the equalities
$H^i(X',\M')=H^i(X,\pi_*\M')= \oplusa[\infty]{n=0}
H^i(X,\pp^n\M)$, because $\pi_*\M'=\oplusa[\infty]{n=0} \pp^n\M$.
\end{proof}

\begin{rem} Reading carefully the above proof, it is not difficult to
see that one has already showed that  $H^i(X,\M)^\wedge = \plim{n}
H^i(X,\M/\pp^n\M)$. \end{rem}

\bigskip


\bibliographystyle{amsplain}
\bibliography{xbib}
\end{document}